\numberwithin{equation}{section}
\theoremstyle{plain}
\newtheorem{Th}{Theorem}[section]
\newtheorem{Lemma}[Th]{Lemma}
\newtheorem{Cor}[Th]{Corollary}
\newtheorem{Prop}[Th]{Proposition}
\newtheorem*{Theorem-non}{Theorem}
\newtheorem*{Theorem-non2}{Theorem}
 \theoremstyle{definition}
 \newtheorem*{Proof-non}{Proof of Theorem \ref{Maintheorem} assuming Propositions \ref{Prop1},\ref{Propm}}
\newtheorem*{Proof-non2}{Proof of (1)  ($\bf{m_{1}}$-estimate) in Proposition \ref{Propm} assuming Proposition \ref{Proposition 5.1}}
\newtheorem*{Proof-non3}{Proof of Theorem \ref{Maintheorem2} assuming Propositions \ref{Prop1},\ref{Propm}}
\newtheorem*{Proof-non4}{Proof of Proposition \ref{Prop1}}
\newtheorem*{Proof-non5}{Proof of Proposition \ref{Propm}}
\newtheorem{Rem}[Th]{Remark}
\newtheorem{?}[Th]{Problem}
\begin{document}

\author{Jiseong Kim}
\address{The University of Mississippi, Department of Mathematics
Hume Hall 335
Oxford, MS 38677}
\email{Jkim51@olemiss.edu}
\title{The divisor function over integers with a missing digit}

\begin{abstract} 
In this paper, we study the sum of the divisor function over short intervals under digit restrictions.

\end{abstract}

\maketitle
\section{Introduction} Let X be a power of a natural number $g$ and let $[X,X+H]^{\ast}$ be the set of natural numbers $X < n \leq X+H$ such that $n$ has only digits of $\{0,1,2,3, \cdots, g-1\}\textbackslash \{b\}$ in its base-$g$ expansion, and for some $b \in \{2,3, \cdots, g-1\}.$ The size of $[X,X+H]^{\ast}$ is approximately $H^{\log (g - 1) / \log g} $ when $X$ is a power of $g.$  Due to the lack of multiplicative structure among the elements in $[X,X+H]^{\ast},$ some standard approaches for considering summations of arithmetic functions over $[X,X+H]^{\ast}$ do not work. However, because $[X,X+H]^{\ast}$ has a nice Fourier transform, various interesting results have been proved (see \cite{Erdos1}, \cite{Erdos2}, \cite{Konyagin}, \cite{Banks}, \cite{Mau}). More recently, Maynard \cite{Jmay} proved that there are infinitely many primes with restricted digits, using various sieve methods and Fourier analysis. He established very strong upper bounds for
$\sum_{n \in [1,X]^{\ast}} e(\alpha n)$ and its moments. This allowed him to apply the circle method to his problem, which is considered a binary problem. Using the upper bounds, we also study divisor sums over $[X,X+H]^{\ast}.$ Note that sums over powers of divisor functions over $[X,X+H]^{\ast}$ can be useful for studying several problems with restricted digits, such as Goldbach-type problems with restricted digits. For other arithmetic functions over integers with restricted digits, see \cite{Nath}.

\begin{Th}\label{main} Let $g$ be a  sufficiently large positive number, and let $|[1.H]^{\ast}|= \left(X/(\log X)^{3}\right)^{1/2\lambda} $ where $\lambda=1-\left(\log (\log g +1)/\log (g-1)\right).$  Then

\begin{equation}\begin{split} \sum_{ n \in [X,X+H]^{\ast}} d_{2}(n) &= 
\sum_{q \leq P} \int_{|\beta|<
\frac{1}{qQ}} \int_{X}^{X+H} p_{2,q}^{\ast}(x) e(\beta x) \sum_{n \in [X,X+H]^{\ast}} c_{q}(n) e(-n\beta) dx d\beta 
\\& + O \left( |[X,X+H]^{\ast}|(\log X)^{5/2}\right) \end{split}
\end{equation} as  $X \rightarrow \infty,$ 
where \begin{equation}\label{p2q}p_{2,q}^{\ast}(x):=\frac{1}{q}\left( \log x+ 2\gamma    -2\log q\right),\end{equation}
$$c_{q}(n) = \sum_{1 \leq a \leq q \atop (a,q)=1 } e\left(\frac{an}{q}\right),$$ and
$P=X^{1/4-\eta}, Q=P^{2}$ for some sufficiently small $\eta>0.$
\end{Th}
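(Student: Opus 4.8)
\medskip
\noindent\textbf{Sketch of the argument.}
The plan is to run the circle method, with the exponential sum of the digit-restricted set serving as the ``arithmetic'' factor in place of a singular series. Put
\[
F(\alpha):=\sum_{n\in[X,X+H]^{\ast}}e(-n\alpha),\qquad
D(\alpha):=\sum_{X<n\le X+H}d_{2}(n)\,e(n\alpha),
\]
so that $\mathbf 1_{[X,X+H]^{\ast}}(n)=\int_{0}^{1}F(\alpha)e(n\alpha)\,d\alpha$ and therefore $\sum_{n\in[X,X+H]^{\ast}}d_{2}(n)=\int_{0}^{1}F(\alpha)D(\alpha)\,d\alpha$. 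I would then perform a Farey dissection of $[0,1]$ into major arcs $\mathfrak M(q,a)=\{\alpha:\,|\alpha-a/q|\le 1/(qQ)\}$, $1\le a\le q\le P$, $(a,q)=1$ (pairwise disjoint because $Q=P^{2}>P$), and the complementary minor arcs $\mathfrak m$, and split the integral accordingly. The theorem reduces to two claims: an evaluation of $\int_{\mathfrak M}FD$ whose main term is the displayed sum, and the bound $\int_{\mathfrak m}|FD|\ll|[X,X+H]^{\ast}|(\log X)^{5/2}$.

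For the major arcs, on $\mathfrak M(q,a)$ I would write $\alpha=a/q+\beta$ and prove, uniformly for $q\le P$ and $|\beta|\le 1/(qQ)$, an asymptotic of the form
\[
D\!\left(\tfrac aq+\beta\right)=\int_{X}^{X+H}p_{2,q}^{\ast}(x)\,e(x\beta)\,dx+E_{q}(\beta),
\]
with $E_{q}$ small after being summed over the arcs. This comes from the Dirichlet hyperbola method: write $d_{2}(n)=\sum_{de=n}1$, split at $n^{1/2}$, and expand each divisibility condition through $\mathbf 1_{d\mid n}=\tfrac1d\sum_{r\mid d}c_{r}(n)$ (using $\sum_{r\mid d}c_{r}(n)=d\,\mathbf 1_{d\mid n}$); the arising progression and geometric sums produce, after a short computation, exactly the weight $p_{2,q}^{\ast}$ of \eqref{p2q}. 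The crucial feature is that $\sum_{n\in(X,X+H]}e(na/q)\mathbf 1_{d\mid n}$ has a main term depending on $q$ but not on $a$, so summing $F(a/q+\beta)$ over all $(a,q)=1$ turns $\sum_{(a,q)=1}e(-na/q)$ into the Ramanujan sum $c_{q}(n)$; after the harmless change $\beta\mapsto-\beta$ this recovers precisely the main term in the statement. It then remains to check that the aggregate of the $E_{q}(\beta)$, of truncating $n^{1/2}$ down to $P$ (equivalently, of discarding the Ramanujan terms with $q>P$, which are not individually small), and of restricting/completing the $\beta$-integrals is $\ll|[X,X+H]^{\ast}|(\log X)^{5/2}$; the $q>P$ tail is handled exactly as the minor arcs below.

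The minor-arc estimate is the heart of the matter, and I expect it to be the main obstacle. A crude Cauchy--Schwarz, using $\int_{0}^{1}|D|^{2}=\sum_{X<n\le X+H}d_{2}(n)^{2}\ll H(\log X)^{3}$ and $\int_{0}^{1}|F|^{2}=|[X,X+H]^{\ast}|$, yields only $\int_{\mathfrak m}|FD|\ll|[X,X+H]^{\ast}|^{1/2}H^{1/2}(\log X)^{3/2}$, which is far too large since $H$ is a fixed power of $|[X,X+H]^{\ast}|$ greater than $1$; it also does not suffice to pull out $\sup_{\mathfrak m}|F|$, because the divisor sum $D$ itself must be exploited for cancellation, not merely for its size. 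What is needed are the sharp Fourier estimates for digit-restricted sets in the spirit of Maynard \cite{Jmay} --- the short-interval analogue of his pointwise and $L^{p}$ bounds for $\sum_{n\in[1,X]^{\ast}}e(\alpha n)$ --- combined with a bilinear (smooth times rough) factorization $D=\sum_{de\in(X,X+H]}e\big((de)\alpha\big)$ supplied by the hyperbola method. Interpolating the pointwise minor-arc bound for $F$ against its mean-square and higher moments, and using the bilinear cancellation in $D$, should give $\int_{\mathfrak m}|FD|\ll|[X,X+H]^{\ast}|(\log X)^{5/2}$; this is exactly where the choices $P=X^{1/4-\eta}$, $Q=P^{2}$, and the calibration of $\lambda$ (which places $|[X,X+H]^{\ast}|$ just below $X^{1/2}$) enter. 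Granting these inputs, which I would isolate as a major-arc proposition and a minor-arc proposition, the major-arc evaluation and the Fourier-inversion bookkeeping are comparatively routine, and assembling the two pieces gives the stated identity.
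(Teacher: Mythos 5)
Your setup and major-arc shape match the paper, but there is a genuine gap in your minor-arc treatment, which you yourself identify as the heart of the matter and then leave unresolved. You reject Cauchy--Schwarz (correctly) and reject ``pulling out a sup'' --- but you only consider pulling out $\sup_{m}|F|$. The estimate that actually closes the argument is the opposite H\"older split:
\[
\int_{m}|F(\alpha)D(\alpha)|\,d\alpha\;\le\;\Bigl(\sup_{\alpha\in m}|D(\alpha)|\Bigr)\int_{0}^{1}|F(\alpha)|\,d\alpha .
\]
No bilinear cancellation in $D$ beyond its pointwise minor-arc bound is needed. The two inputs are: (i) $\sup_{\alpha\in m}|D(\alpha)|\ll_{\varepsilon}X^{1/2-\eta+\varepsilon}+X^{1/2}\log X$, which comes from Jutila's additively twisted Voronoi summation / Wilton-type approximate functional equation for $\sum d_2(n)e(n(a/q+\beta))$ (after Dirichlet approximation with $P<q\le Q$, splitting according to whether $q|\beta|$ is large or small); and (ii) Maynard's $L^{1}$ Fourier bound for the digit-restricted set, $\int_{0}^{1}|F|\ll|[X,X+H]^{\ast}|^{1-\lambda}$ with $\lambda=1-\log(\log g+1)/\log(g-1)$. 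The calibration $|[1,H]^{\ast}|^{\lambda}=(X/(\log X)^{3})^{1/2}$ is chosen precisely so that the product is $X^{1/2}\log X\cdot|[X,X+H]^{\ast}|\,(X/(\log X)^{3})^{-1/2}=|[X,X+H]^{\ast}|(\log X)^{5/2}$. This is what the paper means by ``the substantial savings in $\sum_{n\in[X,X+H]^{\ast}}e(\alpha n)$'': all the power saving comes from the sparseness of the digit set in $L^{1}$, and only a square-root pointwise bound is needed from the divisor sum. Your proposed route (interpolating moments of $F$ against a bilinear decomposition of $D$) is not made precise and, as written, does not constitute a proof.

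On the major arcs your hyperbola-method derivation of the weight $p_{2,q}^{\ast}$ and of the Ramanujan sums $c_{q}(n)$ (from summing $e(-na/q)$ over $(a,q)=1$) is the same in spirit as the paper's, but the paper instead imports Jutila's formula $S_{2}(a/q;x)=xp_{2,q}(x)+\Delta(a/q;x)+O(q(\log 2q)^{2})$ together with a mean-square bound for $\Delta(a/q;x)$, and then controls the resulting major-arc error exactly as on the minor arcs: a pointwise bound for $S_{2}(\alpha;X,X+H)-\int_{X}^{X+H}p_{2,q}^{\ast}(x)e(\alpha x)\,dx$ of size $O_{\varepsilon}(X^{3/8+2\eta+\varepsilon})$, multiplied by $\int_{\mathcal M}|F|$. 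If you pursue the elementary hyperbola route you must verify that your error terms, uniformly for $q\le P=X^{1/4-\eta}$ on a short interval of length $H\approx X^{1/2+o(1)}$, are small enough after this same $L^{1}$ pairing; the classical hyperbola error is not obviously sufficient here, which is why the paper leans on Jutila's sharper machinery.
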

We couldn't obtain the main term due to a lack of understanding of the following double sums
$$\sum_{n \in [X,X+H]^{\ast}} \sum_{q<P} \frac{c_{q}(n)}{q}, \sum_{n \in [X,X+H]^{\ast}} \sum_{q<P} \frac{|c_{q}(n)|}{q}.$$ However, the size of the error term allows us to get an upper bound. By considering the number of multiples of $q \leq P$ in $[X,X+H]^{\ast},$ we can obtain an upper bound with a few extra power of logarithm (see Corollary \ref{Cor}). 
In \cite{Konyagin}, it is proved that there exist effectively computable positive constants \( c_5 = c_5(g) > 0 \) and \( c_6 = c_6(g) > 0 \) such that if \( P \leq N^{c_4} \), then
\begin{equation}\begin{split}\label{kon}
&\sum_{q \leq P ,(q, g) = 1} q \max_{a \, (\bmod q)} \left| \left|\left\{ n \in [X,2X]^{\ast} : n \equiv a \, (\bmod q) \right\}\right| - \frac{|[X,2X]^{\ast}|}{q} \right|
\\& \leq 
 c_5 |[X,2X]^{\ast}| P \exp \left( -c_6 \sqrt{\log X} \right),
\end{split}\end{equation}
where \( c_4 = \frac{\log 2}{4 \log g} \).
Although this provides a nice upper bound for the error term, it does not allow us to bound the main term in the above theorem, due to the condition on the size of $P.$ If we are only concerned with bounding the number of multiples of \( q \leq P \) among \( [X,X+H]^{\ast} \), we just need to estimate the correct order of magnitude of the number of multiples. Therefore, we use the following result, which gives us a better choice for \( P \).

\begin{Lemma}\label{Mau1} Let $g$ be a sufficiently large natural number.
For every \( A > 0 \), there exists \( B>0 \) such that
\begin{equation}\label{Mau} 
\sum_{\substack{q \leq |[1,X]^{\ast}|^{\lambda} (\log X)^{-B} \\ (q, g(g-1)) = 1}} \max_{a \, (\bmod q)} \left|\left| \left\{ n \in [1,X]^{\ast} : n \equiv a \, (\bmod q) \right\}\right| - \frac{|[1,X]^{\ast}|}{q} \right| 
\ll_{g} |[1,X]^{\ast}| (\log X)^{-A}
\end{equation}
where $\lambda$ is a positive constant such that 
 $$\int_{\alpha \in [0,1]} \left| \sum_{n \in [1,X]^{\ast}} e(n\alpha) \right| d\alpha \ll |[1,X]^{\ast}|^{1-\lambda}.$$
\end{Lemma}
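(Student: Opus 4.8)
The plan is a Bombieri--Vinogradov-type argument: reduce the left side of \eqref{Mau} to the exponential sum $S(\alpha):=\sum_{n\in[1,X]^{\ast}}e(n\alpha)$ evaluated at Farey fractions, and then bound those values using the digit structure of $S$ together with Maynard's estimates. Write $X=g^{k}$ and $Y:=|[1,X]^{\ast}|^{\lambda}(\log X)^{-B}$. Detecting $n\equiv a\ (\mathrm{mod}\ q)$ by additive characters, for every $q$ and every $a$
\[
\Bigl|\{\,n\in[1,X]^{\ast}:n\equiv a\ (\mathrm{mod}\ q)\,\}\Bigr|-\frac{|[1,X]^{\ast}|}{q}
=\frac1q\sum_{c=1}^{q-1}e\!\left(-\frac{ac}{q}\right)S\!\left(\frac cq\right),
\]
whose modulus is $\le\frac1q\sum_{c=1}^{q-1}|S(c/q)|$ uniformly in $a$, so the left side of \eqref{Mau} is at most $\sum_{q\le Y,\ (q,g(g-1))=1}\frac1q\sum_{c=1}^{q-1}|S(c/q)|$. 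Collecting each $c/q$ in lowest terms as $a/r$ (so $r\mid q$, $r>1$, $(a,r)=1$, and automatically $(r,g(g-1))=1$) and summing the factors $1/(mr)$ over $q=mr\le Y$, this is
\[
\ll(\log X)\sum_{\substack{1<r\le Y\\(r,g(g-1))=1}}\frac1r\,\mathcal T(r),\qquad
\mathcal T(r):=\sum_{\substack{1\le a<r\\(a,r)=1}}\bigl|S(a/r)\bigr|,
\]
so it suffices to estimate the Farey average $\mathcal T(r)$.

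For $\mathcal T(r)$ I would exploit the factorization over digits: since $X=g^{k}$,
\[
S(a/r)=\prod_{j=0}^{k-1}F\!\left(\frac{g^{j}a}{r}\right),\qquad
F(\theta):=\sum_{\substack{0\le d<g\\ d\ne b}}e(d\theta),
\]
and the residues $g^{j}a\bmod r$ are periodic with period $e_{r}:=\operatorname{ord}_{r}(g)$, so $S(a/r)$ is, up to a bounded tail, the $\lfloor k/e_{r}\rfloor$-th power of $\prod_{0\le j<e_{r}}|F(g^{j}a/r)|$. Since $F$ is a $\{0,1\}$-polynomial with $g-1$ nonzero terms, $\|F\|_{L^{2}(\mathbb{T})}=(g-1)^{1/2}$, hence its Mahler measure is $\le(g-1)^{1/2}$; accordingly, over an orbit that equidistributes, the geometric mean of the $|F(g^{j}a/r)|$ along the orbit is $\le(g-1)^{1/2+o(1)}$ --- a constant-factor saving per digit over the trivial bound $g-1$. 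The integrated form of this dichotomy --- generic orbits save, while a sparse family of clustering moduli do not --- is exactly what underlies the hypothesis $\int_{0}^{1}|S(\alpha)|\,d\alpha\ll|[1,X]^{\ast}|^{1-\lambda}$. Re-running Maynard's Fourier-analytic treatment of the digit sum with the extra Farey variable $a/r$ (his major-arc estimates already give pointwise control of $|S(a/r)|$), one should obtain, for $r\le Y$ with $(r,g(g-1))=1$ lying outside a sparse exceptional set $\mathcal E$,
\[
\mathcal T(r)\ll\varphi(r)\,|[1,X]^{\ast}|^{1-\lambda}(\log X)^{O(1)},
\]
with $\mathcal E$ thin enough that its contribution, bounded by $|[1,X]^{\ast}|\sum_{r\in\mathcal E}\frac1r$ via $|S(a/r)|\le|[1,X]^{\ast}|$, is $\ll|[1,X]^{\ast}|(\log X)^{-A}$.

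Assembling these and using $\sum_{r\le Y}\varphi(r)/r\ll Y$,
\[
\sum_{\substack{q\le Y\\(q,g(g-1))=1}}\ \max_{a\,(\mathrm{mod}\,q)}\Bigl|\,\bigl|\{\,n\in[1,X]^{\ast}:n\equiv a\ (\mathrm{mod}\ q)\,\}\bigr|-\tfrac{|[1,X]^{\ast}|}{q}\,\Bigr|
\ll Y\,|[1,X]^{\ast}|^{1-\lambda}(\log X)^{O(1)}
=|[1,X]^{\ast}|\,(\log X)^{O(1)-B},
\]
by the definition of $Y$; choosing $B=B(A)$ larger than that fixed exponent plus $A$ yields \eqref{Mau}. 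The main obstacle is the displayed bound for $\mathcal T(r)$: one cannot get it cheaply from $\int_{0}^{1}|S|$, since bounding $|S(a/r)|$ by a local average of $|S|$ over an arc of length $\asymp1/X$ loses a factor of order $X$; instead one must thread the modulus $r$ through Maynard's combinatorial-Fourier analysis of the digit sum and, along the way, isolate and discard the moduli for which the orbit $\{g^{j}a\bmod r\}$ clusters so tightly near $0$ that no per-digit saving survives. Finally, the condition $(q,g(g-1))=1$ is genuinely needed: for a prime $p\mid g-1$ one has $n\equiv\sum_{j}d_{j}\ (\mathrm{mod}\ p)$ for the digit string $(d_{j})$ of $n$, and $\sum_{j}d_{j}$ is not equidistributed mod $p$ over the admissible digit set, so there the count is not at all close to $|[1,X]^{\ast}|/p$.
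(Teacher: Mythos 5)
Your reduction is sound as far as it goes: the orthogonality step, the passage to lowest terms, and the final summation $\sum_{r\le Y}\varphi(r)/r\ll Y$ are all correct, and the overall skeleton (everything hinges on a bound for the Farey average $\mathcal T(r)=\sum_{(a,r)=1}|S(a/r)|$ of the shape $\varphi(r)\,|[1,X]^{\ast}|^{1-\lambda}(\log X)^{O(1)}$) does match how results of this type are actually proved. But the proposal does not prove that bound --- it asserts it, defers it to ``re-running Maynard's Fourier-analytic treatment,'' and introduces an exceptional set $\mathcal E$ whose sparseness is never established. That displayed estimate for $\mathcal T(r)$ \emph{is} the lemma; everything surrounding it is routine bookkeeping. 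The heuristic you offer in its place (Mahler measure of $F$ at most $(g-1)^{1/2}$, hence a per-digit geometric-mean saving along the orbit $\{g^{j}a\bmod r\}$) is not a proof and has concrete obstructions: the orbit has length $\operatorname{ord}_r(g)$, which can be very small even when $(r,g(g-1))=1$, so there is no equidistribution to appeal to; the saving must be uniform in $a$ after summing over the $\varphi(r)$ classes; and you give no mechanism for identifying, let alone controlling the density of, the ``clustering'' moduli. You also correctly observe that the $L^{1}$ hypothesis defining $\lambda$ cannot be converted into the Farey bound by a naive local-average (Gallagher/Sobolev) argument without losing a factor of order $X$ --- but having ruled out the cheap route, you supply nothing in its place, so the hypothesis on $\lambda$ is never actually used.

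For comparison, the paper disposes of this lemma in one line by citing \cite{DARTYGE2001230}, where the required input is precisely a hybrid $\ell^{1}$-over-residues bound $\sum_{a\bmod r}|S(a/r+\beta)|\ll r\,|[1,X]^{\ast}|^{1-\lambda}$ (uniform in $\beta$, for $r$ coprime to $g(g-1)$), proved directly from the digit product formula by an iterative $\ell^{1}$--$\ell^{\infty}$ argument over base-$g$ digits rather than from orbit equidistribution; the same mechanism is what produces the $L^{1}$ bound defining $\lambda$, which is why the level of distribution is tied to that exponent. A complete write-up along your lines would have to reproduce that argument (or Maynard's Lemma 10.3-type hybrid bounds from \cite{Jmay}) in place of the Mahler-measure heuristic. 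As a minor point, your closing claim that for $p\mid g-1$ the counts in residue classes are ``not at all close'' to $|[1,X]^{\ast}|/p$ is overstated: the digit-sum distribution modulo such $p$ converges to uniform exponentially fast in the number of digits since the admissible digit set is not contained in a coset of a proper subgroup; the coprimality condition is genuinely forced only at primes dividing $g$ (where the last digit obstructs equidistribution) and is otherwise a convenience inherited from the cited results.
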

\begin{proof} This is a special case of the results in \cite{DARTYGE2001230}.
\end{proof}
\begin{Rem} 
Since we set $X$ as a power of $g$ and $b$ is neither $0$ nor $1$, we have 
$1_{(X,X+H]^{\ast}}(n)= 1_{(0,H]^{\ast}}(n)$ where $1_{E}(n)$ denotes the indicator function of the set $E.$
Due to the large size of $\lambda,$ some elementary approaches give us the asymptotic results for some cases. For example, when considering the interval $[X,2X],$ 
$$\sum_{ n \in [X,2X]^{\ast}} d_{2}(n) = 
2 \sum_{m<X^{1 / 2}} \sum_{\substack{n<X \\ m \mid n}} 1_{[X,2X]^{\ast}}(n)+O(\sqrt{X})$$ and by applying Lemma \ref{Mau1}, we can get asymptotics. By following the argument in the proof of \cite[Theorem 2]{DARTYGE2001230}, it is straightforward to show that, as long as the size $|[1,H]^{\ast}|^{\lambda}$ is greater than $X^{1/2}(\log X)^{-1},$one can get the same upper bound in Corollary \ref{Cor}. One of the key features of our approach is that we only need to consider $q$ smaller than $P \ll X^{1/4}.$ Also, our approach might be useful for studying variations of other arithmetic functions that have a Voronoi-type formula.
\end{Rem}

Using Lemma \ref{Mau1} with Theorem \ref{main}, we can prove the following corollaries.
\begin{Cor}\label{Cor}
 Assume the conditions of Theorem \ref{main}. Then
\[
\sum_{n \in [X,X+H]^{\ast} } d_2(n) \ll d_{2}(g(g-1)) |[X,X+H]^{\ast}| (\log X)^3
\] as  $X \rightarrow \infty.$ 
\end{Cor}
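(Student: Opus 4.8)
The idea is to substitute the identity of Theorem~\ref{main}. Its error term is $O(|[X,X+H]^{\ast}|(\log X)^{5/2})$, already within the claimed bound, so the whole task is to show that the main term
\[
T:=\sum_{q\le P}\int_{|\beta|<\frac{1}{qQ}}\int_X^{X+H}p_{2,q}^{\ast}(x)\,e(\beta x)\sum_{n\in[X,X+H]^{\ast}}c_q(n)e(-n\beta)\,dx\,d\beta
\]
satisfies $|T|\ll d_2(g(g-1))\,|[X,X+H]^{\ast}|(\log X)^{3}$. Write $M_d:=|\{n\in[X,X+H]^{\ast}:d\mid n\}|$ and recall the Ramanujan sum identity $c_q(n)=\sum_{d\mid(q,n)}\mu(q/d)\,d$, which gives $|c_q(n)|\le (q,n)=\sum_{d\mid(q,n)}\phi(d)$.

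First I would interchange the $\beta$-integration with the sums over $q,n$ and with the $x$-integral and carry it out: for fixed $x,n$,
\[
\int_{|\beta|<\frac{1}{qQ}}e\bigl(\beta(x-n)\bigr)\,d\beta=\frac{\sin\bigl(2\pi(x-n)/(qQ)\bigr)}{\pi(x-n)},
\]
a sinc kernel whose partial integrals $\int_a^b\frac{\sin(ct)}{\pi t}\,dt=\frac1\pi\bigl(\mathrm{Si}(cb)-\mathrm{Si}(ca)\bigr)$ are bounded by an absolute constant uniformly in $a,b,c$, since the sine integral $\mathrm{Si}$ is bounded. Next, for each fixed $n\in[X,X+H]$ I would prove
\[
\Bigl|\int_X^{X+H}\bigl(\log x+2\gamma-2\log q\bigr)\,\frac{\sin\bigl(2\pi(x-n)/(qQ)\bigr)}{\pi(x-n)}\,dx\Bigr|\ll\log X ,
\]
uniformly for $q\le P$: splitting $\log x+2\gamma-2\log q=(\log n+2\gamma-2\log q)+\log(x/n)$, the first (constant in $x$) piece contributes $(\log n+2\gamma-2\log q)\cdot O(1)\ll\log X$ by the sinc bound (using $\log q\le\log P\ll\log X$), while the second contributes only $O(H/X)$, because $|\log(x/n)|\le|x-n|/X$ cancels the factor $1/(x-n)$ and leaves an integrand of size $O(1/X)$ on an interval of length $H\le X$. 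Recalling $p_{2,q}^{\ast}(x)=\frac1q(\log x+2\gamma-2\log q)$ and using $|c_q(n)|\le\sum_{d\mid(q,n)}\phi(d)$, this yields
\[
|T|\ll(\log X)\sum_{q\le P}\frac{1}{q}\sum_{d\mid q}\phi(d)\,M_d .
\]

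It then remains to bound the double sum by $\ll d_2(g(g-1))\,|[1,H]^{\ast}|(\log X)^{2}$; since $|[X,X+H]^{\ast}|=|[1,H]^{\ast}|$ by the Remark, this finishes the proof. Every $d\mid q\le P$ obeys $d\le P=X^{1/4-\eta}\ll|[1,H]^{\ast}|^{\lambda}(\log X)^{-B}$, so Lemma~\ref{Mau1} (applied with $X$ replaced by $H$, legitimate because by the Remark the residue-class counts of $[X,X+H]^{\ast}$ modulo $d$ are those of $[1,H]^{\ast}$ and $\log H\asymp\log X$) gives, for $d$ coprime to $g(g-1)$, $M_d=|[1,H]^{\ast}|/d+E_d$ with $\sum_{d\le P,\,(d,g(g-1))=1}|E_d|\ll_g|[1,H]^{\ast}|(\log X)^{-A}$ for every $A>0$. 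For $q$ coprime to $g(g-1)$ every divisor is too, so $\sum_{d\mid q}\phi(d)M_d\le|[1,H]^{\ast}|\sum_{d\mid q}\frac{\phi(d)}{d}+\sum_{d\mid q}d\,|E_d|\le|[1,H]^{\ast}|\,d_2(q)+\sum_{d\mid q}d\,|E_d|$, and summing $\frac1q$ over $q\le P$ while using $\sum_{q\le P}d_2(q)/q\ll(\log X)^2$ and the negligibility of the total $E$-contribution gives the bound for that range. For the remaining $q$ I would factor each relevant $d$ as $d=d'd''$ with $d'\mid(g(g-1))^{\infty}$ and $(d'',g(g-1))=1$, bound $M_{d'd''}$ by a joint digit-equidistribution estimate of the shape $\varrho_{d'}\bigl(|[1,H]^{\ast}|/d''+|E_{d''}|\bigr)$, where $\varrho_{d'}$ is the density of the multiples of $d'$ among base-$g$ restricted-digit integers (controlled by the last digit for $p\mid g$ and by the digit sum for $p\mid g-1$), and observe that the resulting sum is governed by a convergent finite Euler product over $p\mid g(g-1)$, which one checks is $\ll d_2(g(g-1))$; this is the source of the factor $d_2(g(g-1))$.

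The step I expect to be the main obstacle is precisely this last one: Lemma~\ref{Mau1} explicitly excludes moduli sharing a prime with $g(g-1)$, so counting the integers of $[X,X+H]^{\ast}$ in a fixed residue class modulo such a $d$ requires a separate, digit-combinatorial argument, and organising it so that the $g$-dependence collapses to the clean quantity $d_2(g(g-1))$ (rather than a larger power of $\log X$ or a worse function of $g$) is delicate. A secondary technical point is the uniformity in $q$ of the sinc-integral bound: $q$ runs up to $P=X^{1/4-\eta}$, and the kernel's scale $qQ$ can be much smaller than the length $H$ of the interval of integration (it is, for small $q$, since $H>Q$ for these parameters), so one cannot replace the kernel by its value at $x=n$ and must genuinely exploit the boundedness of $\mathrm{Si}$.
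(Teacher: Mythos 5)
Your proposal is correct and follows essentially the same route as the paper's own proof: both carry out the $\beta$-integration to obtain a sinc kernel whose $x$-integral against $p_{2,q}^{\ast}$ is $O(\log X/q)$ uniformly in $n$ and $q\le P$, both reduce via $|c_q(n)|\le (n,q)$ to a sum of divisor counts $M_d$ over moduli up to $P$, both win the final factor of $\log X$ from Lemma \ref{Mau1} (which applies since $P\ll |[1,H]^{\ast}|^{\lambda}(\log X)^{-B}$), and in both the factor $d_2(g(g-1))$ arises from the divisors of $g(g-1)$ hidden in the moduli. The differences are presentational (you bound the sinc integral via boundedness of $\mathrm{Si}$ and a $\log(x/n)$ splitting rather than integration by parts, and you organize the gcd sum as $\sum_{d\mid q}\phi(d)M_d$ rather than the paper's rearrangement over $l\mid q$), and the moduli sharing a factor with $g(g-1)$ are treated at the same level of detail in both: you sketch a digit-combinatorial argument and flag it as the delicate point, while the paper merely asserts that ``the coprimality condition in Lemma \ref{Mau1} can be easily resolved.''
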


\begin{proof}

Let \( L_q = \frac{2\pi}{qQ} \). By separating the integral in Theorem \ref{main}, we get
\[
\frac{1}{\pi}\int_X^{X+H} \frac{p_{2,q}^{\ast}(x)}{x-n} \sin \left( L_q (x - n) \right) dx \ll \left| \int_{L_q(X - n)}^{-L_{q}} + \int_{L_{q}}^{L_q(X +H - n)} p_{2,q}^{\ast}\left( n + \frac{x}{L_q} \right) \frac{\sin x}{x} dx \right| + \frac{\log X}{qQ}.
\]
Using the fact that
\[
\left| \int_0^x \frac{\sin y}{y} dy - \frac{\pi}{2} \right| \ll \frac{1}{|x|}
\]
for large \( x \neq 0 \), 
and 
\[
\frac{\sin x}{x} = 1 + o(x^{2})
\]
for small $x,$ 
we have
$$\int_{0}^{x} \frac{ \sin y}{y}dy \ll \min \left(\frac{\pi}{2}, |x|\right).$$
Also noting that $\left( p_{2,q}^{\ast}(x)\right)' = \frac{1}{qx} .$
By applying integration by parts, we have
\begin{equation}\label{2inte}\left| \int_{L_q(X - n)}^{-L_q}  p_{2,q}^{\ast}\left( n +\frac{x}{L_q} \right) \frac{\sin x}{x} dx \right| \ll \frac{\log X}{q} + \left|\int_{L_{q}(X-n)}^{-L_{q}} \frac{1}{qL_{q}\left(n+x/L_{q}\right)} \min \left(\frac{\pi}{2}, |x|\right) dx\right|
.\end{equation}
Since $L_{q} \leq |L_{q}(X-n)| \ll \frac{X}{qQ},$
$qL_{q}\left(n+x/L_{q}\right) \gg \frac{X}{Q}$ when $x \in [L_{q}(X-n), -L_{q}].$ Therefore, the integral of the right-hand side of $\eqref{2inte}$ is bounded by 
$$O\left(\frac{Q}{X}+\frac{1}{q}\right).$$
Hence, the left-hand side of \eqref{2inte} is bounded by $(\log X)/q.$
By the similar argument as the above,
we have
$$\left| \int_{L_q}^{L_q(X+H-n)}  p_{2,q}^{\ast}\left( n +\frac{x}{L_q} \right) \frac{\sin x}{x} dx \right| \ll \frac{\log X}{q},$$ and 
\[
\left| \int_{L_q(X - n)}^{-L_q} + \int_{L_q}^{L_q(X+H - n)} p_{2,q}^{\ast}\left( n +\frac{x}{L_q} \right) \frac{\sin x}{x} dx \right| \ll \frac{\log X}{q}.
\]
Thus, the entire contribution from the main term is bounded by
\[
\log X \sum_{q \leq P} \sum_{n \in [X,X+H]^{\ast}} \frac{|c_q(n)|}{q}.
\]
Using the crude bound \( |c_q(n)| \leq (n, q) \), we have 

\begin{equation}\label{on}
\begin{split}
\sum_{q \leq P} \sum_{n \in [X,X+H]^{\ast}} \frac{|c_q(n)|}{q} 
&\ll \sum_{a|g(g-1)} \sum_{q \leq P \atop (q, g(g-1)) = a} \frac{1}{q} \sum_{l|q} l \sum_{n \in [X,X+H]^{\ast}  \atop  (n, q) = l} 1 \\
&\ll \sum_{a|g(g-1)} \frac{1}{a} \sum_{q \leq P/a  \atop  (q, g(g-1) = 1} \frac{1}{q} \sum_{l|aq} l \sum_{n \in [X,X+H]^{\ast}  \atop  l|n} 1 \\
&\ll \sum_{a|g(g-1)} \sum_{l \leq aP} \sum_{n \in [X,X+H]^{\ast}  \atop  l|n}  \sum_{lq_1 \leq P  \atop  (lq_1, g(g-1)) = a} \frac{1}{q_1} \\
&\ll \log X \sum_{a|g(g-1)} \sum_{l \leq aP} \sum_{n \in [X,X+H]^{\ast} \atop  l|n} 1.
\end{split}
\end{equation}

By applying \eqref{Mau}, we see that 
\[
\sum_{l \leq aP} \sum_{n \in [X,X+H]^{\ast} \atop  l|n} 1 \ll_{A} |[X,X+H]^{\ast}| \left(\sum_{l \leq aP} \frac{1}{l}+ (\log X)^{-A} \right) \ll |[X,X+H]^{\ast}| \log X.
\]
Note that the coprimality condition in Lemma \ref{Mau1} can be easily resolved.
Thus, the proof is complete.
\end{proof}

\subsection{Sketch of the Proof} The argument starts with the standard Hardy-Littlewood circle method. 
Due to the substantial savings in $\sum_{n\in [X,X+H]^{\ast}} e(\alpha n ),$ we only need to obtain some savings from 
$\sum_{n\in [X,X+H]} d_{2}(n)e(n\alpha).$  Next, we examine the supremum of $\sum_{n \in [X, X+H]} d_{2}(n)e(n\alpha)$ as $\alpha$ lies on the minor arcs. A slight modification of the results in \cite{Julita} provides sufficiently small upper bounds for this supremum.
\section{Notations} From now on, we assume that $\eta>0$ is sufficiently small constant. 
 $\varepsilon$ may vary from line to line and we use the following notations:
\[
S_{2}(\alpha;X,X+H) := \sum_{n=X}^{X+H} d_{2}(n) e(n\alpha), \quad S_{2}(\alpha;x):= \sum_{n=1}^{x} d_{2}(n) e(n\alpha),
\]
\[ S_{[X,X+H]^{\ast}}(\alpha) := \sum_{n \in [X,X+H]^{\ast}} e(n\alpha), \quad F_{[X,X+H]}(\alpha) := |[X,X+H]^{\ast}|^{-1} \left| \sum_{n \in [X,X+H]^{\ast}} e(n\alpha) \right|,\]
\[
Q = X^{1/2 - 2\eta}, \quad P = X^{1/4 - \eta}, \quad
\mathcal{M} := \bigcup_{q \leq P} \bigcup_{\substack{1 < a < q \\ (a, q) = 1}} \left( \frac{a}{q} - \frac{1}{qQ}, \frac{a}{q} + \frac{1}{qQ} \right), \quad m= [0,1] \cap \mathcal{M}^{c}.
\]

\section{Lemma}
In this section, we provide various upper bounds of $S_{2}(\alpha;X,X+H).$ In \cite{Julita}, Jutila generalized the classical Voronoi summation formula to an additively twisted version. One of the crucial results he obtained in that paper is a generalization of Wilton's approximate functional equation (see Lemma \ref{ju}).
We essentially follow the arguments in \cite{Julita}.  
%\begin{Lemma}[The Piltz Divisor Problem]
%\begin{equation}
%S_{k}(0) = X p_{k}(\log X) + O(X^{1-c_{k}})
%\end{equation}
%for some $c_{k} \geq 1/k$
%where $p_{k}(x)$ is a polynomial of degree $k-1$. Under the Lindelof hypothesis, the supremum of $c_{k}=1/2 + 1/2k.$
%\end{Lemma}

%\begin{Lemma}[Variance of the Error Term]
%\[
%\int_{X}^{2X} |\Delta_{k}(x)|^{2} \, dx \ll X^{1+\alpha_{k}}
%\]
%where $\alpha_{k}=1, 2/3$ for $k=2,3,$ respectively. For $k>3, $ it is known that  $\alpha_{k} \geq 1-1/k.$
%Under the Lindelof hypothesis, the infimum of $\alpha_{k}=1-1/k$
%\end{Lemma}
\begin{Lemma}\label{ju}
Let \( q \leq X^{1/2} \) and \( 0 < |\beta| < \frac{1}{q^2} \). If \( q|\beta| \gg \frac{1}{X^{1/2}} \), then
\begin{equation}\begin{split}\label{ju1}
S_2\left( \frac{a}{q} + \beta; X,X+H \right) &= (q |\beta|)^{-1} \sum_{n \in [(q\beta)^2 X, (q\beta)^2(X+H)]} d_2(n) e\left( \left(\frac{-\bar{a}}{q} - \frac{1}{q^2 \beta}\right) n \right) 
\\&+O\left(X^{1/2}\log X\right),
%O_{\varepsilon}\left(|q\beta|H  +qQ^{-1/2}X^{1/4}+q^{4}|\beta|^{7/2}X^{1/2+\varepsilon}\right),
\end{split}\end{equation}
and
\begin{equation}\begin{split}\label{ju2}
S_2\left(  \frac{a}{q};X,X+H\right) &= (X+H)p_{2,q}(X+H) - Xp_{2,q}(X) \\&\quad +  O_{\varepsilon}\left(q^{1/2}HX^{-1/2}(\log X)^{2}+ qX^{\varepsilon}\right)
\end{split}\end{equation}
where $p_{2,q}(x):= \frac{1}{q}\left( \log x + 2\gamma -1 -2\log q\right)$. 
Note that
$$\int_{X}^{X+H} p_{2,q}^{\ast}(x)dx = (X+H)p_{2,q}(X+H) - Xp_{2,q}(X).$$
\end{Lemma}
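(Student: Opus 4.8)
\emph{Proof proposal.} Both identities will be derived from the additively twisted Voronoi summation formula for $d_2$, in the sharp form underlying Jutila's generalization of Wilton's approximate functional equation \cite{Julita}. For a suitable weight $w$ and $(a,q)=1$ this formula reads
\begin{align*}
\sum_{n \ge 1} d_2(n)\, e(an/q)\, w(n) &= \frac{1}{q}\int_0^\infty \bigl(\log t + 2\gamma - 2\log q\bigr)\, w(t)\, dt \\
&\quad + \frac{1}{q}\sum_{m \ge 1} d_2(m)\, e(-\bar a m/q) \int_0^\infty w(t)\, \Phi\!\left(\frac{4\pi\sqrt{mt}}{q}\right) dt,
\end{align*}
with $\Phi(z) = -2\pi Y_0(z) + 4K_0(z)$. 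I would apply it with $w(t) = e(\beta t)\,\mathbf{1}_{(X,X+H]}(t)$ (resp. $w(t) = \mathbf{1}_{(X,X+H]}(t)$ for the second formula), the jump at the two endpoints being handled, via the standard smoothing/truncation device, by an error of size $O(X^{1/2}\log X)$. The $K_0$-part of $\Phi$ is exponentially small in its argument $4\pi\sqrt{mt}/q \gg 1$ and hence negligible after summation against $d_2(m)$, using $q \le X^{1/2}$.

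For \eqref{ju1}: the continuous term $\frac1q\int_X^{X+H}(\log t + 2\gamma - 2\log q)\,e(\beta t)\,dt$ is $\ll (\log X)/(q|\beta|) \ll X^{1/2}\log X$ by one integration by parts, precisely because $q|\beta| \gg X^{-1/2}$; this is why it disappears into the error. In the dual sum I would substitute $Y_0(z) = \sqrt{2/(\pi z)}\,\sin(z - \pi/4) + O(z^{-3/2})$, reducing the inner integral to $\tfrac{q^{1/2}}{(4\pi)^{1/2}}\int_X^{X+H}(mt)^{-1/4}\,e(\beta t \pm 2\sqrt{mt}/q)\,dt$ up to a negligible contribution. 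The phase $\beta t \pm 2\sqrt{mt}/q$ has its stationary point at $t_0 = m/(q\beta)^2$, which lies in $(X,X+H)$ exactly when $m \in [(q\beta)^2 X,\, (q\beta)^2(X+H)]$ (the relevant choice of sign in the $\sqrt{mt}$-term being determined by that of $\beta$); evaluating by stationary phase yields amplitude $(q|\beta|)^{-1}$ (after the $1/q$ prefactor) together with phase $e\!\left(-\bar a m/q - m/(q^2\beta)\right)$, which is exactly the claimed main term. For $m$ bounded away from that interval the phase is non-stationary and the first-derivative test, using $|\beta| < 1/q^2$, keeps the contribution within $O(X^{1/2}\log X)$; the short transition ranges about the two endpoints, and the tail of the dual sum, are bounded trivially and stay of the same order.

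For \eqref{ju2}: taking $\beta = 0$, the continuous term is no longer killed — it equals $\frac1q\int_X^{X+H}(\log t + 2\gamma - 2\log q)\,dt = \int_X^{X+H} p_{2,q}^\ast(t)\,dt = (X+H)p_{2,q}(X+H) - Xp_{2,q}(X)$, by the antiderivative identity recorded in the statement, and this is the main term. In the dual sum the phase $\pm 2\sqrt{mt}/q$ is non-stationary, but here the absolute-value bound $\int_X^{X+H}(mt)^{-1/4}e(\pm 2\sqrt{mt}/q)\,dt \ll \min\bigl(H(mX)^{-1/4},\, qX^{1/4}m^{-3/4}\bigr)$ summed against $d_2(m)$ is not by itself sharp enough; I would instead use the truncated form of the Voronoi expansion, exploit the near-cancellation between the two endpoint evaluations (which furnishes a gain of $\min(1,\, \sqrt{m}\,H/(q\sqrt{X}))$ on the $m$-th term), and invoke Jutila's sharp bounds for the resulting twisted divisor exponential sums; balancing the truncation length then produces $O_\varepsilon\bigl(q^{1/2}HX^{-1/2}(\log X)^2 + qX^\varepsilon\bigr)$.

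The main obstacle is the sharp treatment of the dual sums: for \eqref{ju1} it is the uniform bookkeeping — trading the sharp cutoff for a smooth one, controlling the transition ranges where the stationary point approaches an endpoint and the first-derivative test degrades, and keeping every error at or below $X^{1/2}\log X$ uniformly in $q$ and $\beta$ — while for \eqref{ju2} it is the need to estimate the dual sum with genuine cancellation among the $d_2$-coefficients rather than trivially. Both are precisely the technical content of \cite{Julita}, so the remaining work is the specialisation of those arguments to the short interval $(X,X+H]$ and to the twist $a/q + \beta$, together with tracking the constants $2\gamma$ and $-2\log q$ so that they assemble into $p_{2,q}^\ast$ and $p_{2,q}$ as stated.
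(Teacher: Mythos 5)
Your proposal follows essentially the same route as the paper: the paper proves \eqref{ju1} by directly citing Jutila's Theorem 5, (1.12) (the additively twisted approximate functional equation that you re-derive via Voronoi summation and stationary phase), and proves \eqref{ju2} by writing $S_{2}(a/q;x)=xp_{2,q}(x)+\Delta(a/q;x)+O(q(\log 2q)^{2})$ with the truncated Voronoi expansion of $\Delta(a/q;x)$ and then differencing at $X+H$ and $X$, exactly as in your final paragraph. The one step both arguments leave unproved is the estimate $\Delta(a/q;X+H)-\Delta(a/q;X)\ll_{\varepsilon} q^{1/2}HX^{-1/2}(\log X)^{2}+qX^{\varepsilon}$, which genuinely requires cancellation in the dual $m$-sum rather than term-by-term bounds; the paper dismisses it with ``it is easy to see'' while you defer it to unspecified ``sharp bounds,'' so your sketch is at least as complete as the paper's own proof.
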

\begin{proof}
By using \cite[Theorem 5, (1.12)]{Julita}, the first equation is easily proved.

Now, let us consider the second equation. 
When $x, N \in [X,2X],$
$$S_{2}\left(\frac{a}{q};x\right) = xp_{2,q}(x)+ \Delta(a/q;x) + O\left(q (\log 2q)^2{}\right).$$ where  

$$
\begin{aligned}
\Delta(a/q; x)=(\pi \sqrt{2})^{-1} q^{1 / 2} x^{1 / 4} & \sum_{n \leq N} d(n) e(-n \bar{a}/q) n^{-3 / 4} \cos (4 \pi \sqrt{n x} / q-\pi / 4) \\
& +O\left(q X^{\epsilon}\right) ,
\end{aligned}
$$
 (see \cite[(1.4)]{Julita}, \cite[Section 1.8, Theorem 1.1]{Bookjutila}). 
Therefore, it is easy to see that 
\begin{equation}\begin{split}S_{2}\left(\frac{a}{q};X+H\right) - S_{2}\left(\frac{a}{q};X\right)&= (X+H)p_{2,q}(X+H) - Xp_{2,q}(X) \\&+ O_{\varepsilon}\left(q^{1/2}HX^{-1/2}(\log X)^{2}+ qX^{\varepsilon}\right).\end{split}\nonumber\end{equation}.
\end{proof}

%Note that when $x \in [X,2X]$
%$$S_{2}\left(\frac{a}{q},x\right) = x p_{2,q}(x)+ O(q (\log 2q)^{2} + q^{2/3} X^{1/3+\varepsilon})$$ where  
%$p_{2,q}(x):= \frac{1}{q}\left( \log x + 2\gamma -1 -2\log q\right)$ (see \cite[(1.4)]{Julita}. This comes from the Laurent expansion 
%$$E\left(s, \frac{a}{q}\right)=q^{-1}(s-1)^{-2}+q^{-1}(2 \gamma-2 \log q)(s-1)^{-1}+\cdots,$$
%and using the Perron's formula $D\left(x, \frac{a}{q}\right)=\frac{1}{2 \pi i} \int_{1+\varepsilon-i T}^{1+\varepsilon+i T} E\left(s, \frac{a}{q}\right) x^s s^{-1} d s+O\left(x^{1+\varepsilon} T^{-1}\right)$
%(see \cite{Julita}[(2.2), (2.4)] ).

\begin{Lemma}[\cite{Julita}, Theroem 2]\label{julita2}
For $X \geq q  $ we have 
\begin{equation}\begin{split} &\int_{1}^{X} \left|S_{2}(\frac{a}{q},x)  -\frac{x}{q}(\log x +2\gamma -1 -2\log q)\right|^{2} d x \\&=\left(6 \pi^2\right)^{-1} \zeta^4\left(\frac{3}{2}\right) \zeta(3)^{-1} q X^{\frac{3}{2}}+O_{\varepsilon}\left(q^2 X^{1+\varepsilon} + Xq^{2}\log^{4} (2q)+q^{\frac{3}{2}} X^{\frac{5}{4}+\varepsilon}\right).\end{split} \nonumber \end{equation}
\end{Lemma}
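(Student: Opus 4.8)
The plan is to deduce the mean square of the twisted error term from that of the Voronoi oscillation and to extract the main term from the diagonal contribution. Starting from the decomposition recorded in the proof of Lemma~\ref{ju},
\[
S_2\!\left(\tfrac{a}{q},x\right)=x\,p_{2,q}(x)+\Delta(a/q;x)+O\!\left(q(\log 2q)^2\right),
\]
and using the trivial bound $\Delta(a/q;x)\ll_\varepsilon q^{1/2}x^{1/4+\varepsilon}$ together with Cauchy--Schwarz, I would first reduce the quantity in the statement to $\int_1^X|\Delta(a/q;x)|^2\,dx$ up to an error $O_\varepsilon\!\left(Xq^2\log^4(2q)+q^{3/2}X^{5/4+\varepsilon}\right)$, which already accounts for two of the three error terms (the first from the squared $O(q(\log 2q)^2)$ term, the second from its cross term with $\Delta$).

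Next I would insert the truncated Voronoi expansion
\[
\Delta(a/q;x)=(\pi\sqrt2)^{-1}q^{1/2}x^{1/4}\sum_{n\le N}d(n)e(-n\bar a/q)n^{-3/4}\cos\!\left(\tfrac{4\pi\sqrt{nx}}{q}-\tfrac{\pi}{4}\right)+O_\varepsilon\!\left(qX^\varepsilon\right)
\]
(valid for a suitable power $N$ of $X$), square it, and integrate over $[1,X]$. The contribution of the tail together with the $O(qX^\varepsilon)$ term is $O_\varepsilon(q^2X^{1+\varepsilon})$, the third error term. The diagonal terms $n=m$ give
\[
(\pi\sqrt2)^{-2}q\sum_{n\le N}\frac{d(n)^2}{n^{3/2}}\int_1^X x^{1/2}\cos^2\!\left(\tfrac{4\pi\sqrt{nx}}{q}-\tfrac{\pi}{4}\right)dx,
\]
and since the non-oscillatory half of $\cos^2$ contributes $\tfrac12\cdot\tfrac23X^{3/2}$ while the oscillatory half is negligible, and $\sum_{n\ge1}d(n)^2n^{-3/2}=\zeta(3/2)^4\zeta(3)^{-1}$ by the identity $\sum_n d(n)^2n^{-s}=\zeta(s)^4/\zeta(2s)$, the constants multiply to exactly $(6\pi^2)^{-1}\zeta^4(3/2)\zeta(3)^{-1}qX^{3/2}$.

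The crux is the off-diagonal range $n\ne m$, where one must control
\[
q\sum_{\substack{n,m\le N\\ n\ne m}}\frac{d(n)d(m)}{(nm)^{3/4}}\left|\int_1^X x^{1/2}\cos\!\left(\tfrac{4\pi\sqrt{nx}}{q}-\tfrac{\pi}{4}\right)\cos\!\left(\tfrac{4\pi\sqrt{mx}}{q}-\tfrac{\pi}{4}\right)dx\right|.
\]
Writing the product of cosines in terms of $\cos$ of the phases $\tfrac{4\pi(\sqrt n\pm\sqrt m)\sqrt x}{q}$ and applying the first and second derivative tests — the phase derivative being $\asymp(\sqrt n\pm\sqrt m)/(q\sqrt X)$ — one bounds each oscillatory integral, and then summing over dyadic blocks of $n,m$ with the elementary estimate $\sum_{n\le Y}d(n)^2\ll Y\log^3 Y$ yields a total of $O_\varepsilon(q^{3/2}X^{5/4+\varepsilon})$, while the near-diagonal pairs with $|\sqrt n-\sqrt m|$ small are treated separately by a spacing argument and stay within the claimed bounds. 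Keeping the $q$-dependence sharp in this off-diagonal estimate — in particular handling the close pairs — is the main obstacle; the rest is bookkeeping. Since this statement is precisely Jutila's Theorem~2 in \cite{Julita}, one may instead simply cite it, as we do.
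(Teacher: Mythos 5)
Your proposal is correct and in essence coincides with the paper's one-line proof: the paper likewise simply invokes \cite[Theorem 2]{Julita} and accounts for the additive constant $E(0,a/q)\ll q(\log 2q)^{2}$, whose square and cross term produce exactly the two extra error terms $Xq^{2}\log^{4}(2q)$ and $q^{3/2}X^{5/4+\varepsilon}$ that you identify in your initial reduction step. The additional sketch of the diagonal/off-diagonal analysis reproduces how the cited theorem is proved rather than adding anything needed here, and you correctly conclude by citing it.
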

\begin{proof}
This comes from \cite[Theorem 2]{Julita} with the bound $E(0,a/q) \ll q (\log 2q)^{2}.$
\end{proof}
The following lemma shows that, on average, $F_{[X,X+H]}(\alpha)$ is of size $$\left|[X,X+H]^{\ast}\right|^{-1+ \frac{\log \left((\log g)+1\right)}{\log (g-1)}}.$$
\begin{Lemma}\label{may}  Let $g$ be sufficiently large. Then 
\[
\int_{[0,1]} F_{[X,X+H]}(\alpha) \, d\alpha \ll \left|[X,X+H]^{\ast}\right|^{-1+ \frac{\log \left((\log g)+1\right)}{\log (g-1)}}
\]
\end{Lemma}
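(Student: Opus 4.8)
The plan is to reduce the claim to Maynard's Fourier-analytic estimates for restricted-digit sets, which give strong bounds on the $L^{1}$-norm of the exponential sum $S_{[X,X+H]^{\ast}}(\alpha)$. First I would recall the product structure of the generating function: writing $X$ as a power of $g$ and using the Remark that $1_{(X,X+H]^{\ast}}=1_{(0,H]^{\ast}}$, the exponential sum over $[X,X+H]^{\ast}$ factors as a product over digit positions,
\[
\sum_{n\in[X,X+H]^{\ast}}e(n\alpha)=\prod_{j=0}^{k-1}\Bigl(\sum_{\substack{0\le d< g\\ d\ne b}}e(d g^{j}\alpha)\Bigr),
\]
where $g^{k}\asymp H$, so $|[X,X+H]^{\ast}|=(g-1)^{k}\asymp H^{\log(g-1)/\log g}$. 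The task is then to bound $\int_{0}^{1}\prod_{j}|F_{j}(\alpha)|\,d\alpha$ where $F_{j}(\alpha)=(g-1)^{-1}\sum_{d\ne b}e(dg^{j}\alpha)$.

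Next I would invoke the key $L^{1}$ (or $L^{q}$) bound from \cite{Jmay} (Maynard's restricted-digit paper): for $g$ sufficiently large there is an exponent $\lambda>0$ with
\[
\int_{[0,1]}\Bigl|\sum_{n\in[1,X]^{\ast}}e(n\alpha)\Bigr|\,d\alpha\ll|[1,X]^{\ast}|^{1-\lambda},
\]
which is precisely the hypothesis appearing in Lemma~\ref{Mau1}; one then identifies the admissible value $\lambda=1-\log(\log g+1)/\log(g-1)$, so that $1-\lambda=\log(\log g+1)/\log(g-1)$, matching the exponent in the statement. Concretely, Maynard's argument splits the frequency range $[0,1]$ according to how well $\alpha$ is approximated by rationals with small denominator, and on each piece bounds the product $\prod_{j}|F_{j}(\alpha)|$ by exploiting that only $O(\log g)$ of the factors $|F_{j}(\alpha)|$ can be close to $1$ while the rest contribute geometric decay; summing the contributions gives the stated power saving with exponent governed by $\log(\log g+1)/\log(g-1)$. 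Since $[X,X+H]^{\ast}$ is (up to translation by the fixed power $X$, which does not affect $|F_{j}|$) the same digit set as $[0,H]^{\ast}$, the bound transfers verbatim with $|[1,X]^{\ast}|$ replaced by $|[X,X+H]^{\ast}|$.

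Finally, dividing through by $|[X,X+H]^{\ast}|$ converts the $L^{1}$-bound on the bare exponential sum into the claimed bound on $\int_{[0,1]}F_{[X,X+H]}(\alpha)\,d\alpha$, since $F_{[X,X+H]}(\alpha)=|[X,X+H]^{\ast}|^{-1}|S_{[X,X+H]^{\ast}}(\alpha)|$ and $|[X,X+H]^{\ast}|^{-1}\cdot|[X,X+H]^{\ast}|^{1-\lambda}=|[X,X+H]^{\ast}|^{-\lambda}=|[X,X+H]^{\ast}|^{-1+\log(\log g+1)/\log(g-1)}$. The main obstacle, which is entirely borrowed from \cite{Jmay} rather than reproved here, is establishing that $g$ large forces $\lambda$ to be as large as $1-\log(\log g+1)/\log(g-1)$; this requires Maynard's careful major/minor-arc dissection adapted to the multiplicative-in-$j$ structure of $\prod_{j}F_{j}$, together with the observation that the number of "bad" digit positions where a factor fails to decay is controlled by $\log g$. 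All of the genuinely hard analytic work is cited, so in this paper the proof amounts to: (i) recording the factorization, (ii) quoting the $L^{1}$-estimate of \cite{Jmay} with the explicit $\lambda$, (iii) translating from $[1,X]^{\ast}$ to $[X,X+H]^{\ast}$ via the Remark, and (iv) normalizing by $|[X,X+H]^{\ast}|$.
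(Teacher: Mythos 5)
Your proposal is correct and takes essentially the same route as the paper: the paper's proof is exactly the combination of the translation identity $1_{(X,X+H]^{\ast}}(n)=1_{(0,H]^{\ast}}(n)$ (valid since $X$ is a power of $g$ and $b\neq 0,1$) with the $L^{1}$ bound of \cite[Lemma 10.3, Section 16]{Jmay}, followed by the normalization by $|[X,X+H]^{\ast}|$ that you describe. The additional sketch you give of how Maynard's major/minor-arc dissection produces the exponent $\lambda=1-\log(\log g+1)/\log(g-1)$ is accurate but is cited rather than reproved in the paper.
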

\begin{proof} Using the fact that 
$$1_{(X,X+H]^{\ast}}(n)= 1_{(0,H]^{\ast}}(n)$$ along with 
\cite[Lemma 10.3, Section 16]{Jmay}, the proof is completed.
\end{proof}
\begin{Lemma}\label{35}
Let $|\beta| < T$ for some $0 < T < 1,$ and let $1 \leq q \leq X^{1/2}.$ Then

\begin{equation}\label{integeralbyparts}\begin{split}
S_{2}(\frac{a}{q}+\beta;X,X+H) &= \int_{X}^{X+H} p_{2,q}^{\ast}( x) e(\beta x) \, dx \\&+ O_{\varepsilon}\left
(qX^{\varepsilon} + q^{1/2}H X^{-1/2} (\log X)^{2}+Tq^{3/4}X^{5/8+\varepsilon}H^{1/2}\right).
\end{split}\end{equation}
\end{Lemma}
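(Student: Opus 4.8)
The plan is to separate the oscillatory weight $e(\beta n)$ from the arithmetic weight $d_2(n)e(an/q)$ by partial summation and then to feed in Lemma~\ref{ju} and Lemma~\ref{julita2}. Put $A(t):=\sum_{X\le n\le t}d_2(n)e(an/q)=S_2(a/q;X,t)$, so that $A(t)=0$ for $t<X$; writing the sum as a Riemann--Stieltjes integral against the smooth weight $e(\beta t)$ and integrating by parts gives
\begin{equation*}
S_2(a/q+\beta;X,X+H)=A(X+H)e(\beta(X+H))-2\pi i\beta\int_X^{X+H}A(t)e(\beta t)\,dt .
\end{equation*}
Set $\mathcal{P}(t):=\int_X^t p_{2,q}^{\ast}(x)\,dx=tp_{2,q}(t)-Xp_{2,q}(X)$, so that $\mathcal{P}'=p_{2,q}^{\ast}$ and $\mathcal{P}(X)=0$, and $E(t):=A(t)-\mathcal{P}(t)$. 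The estimate \eqref{ju2} is $\beta$-free, and applied on the subinterval $[X,t]$ (assuming as we may $X+H\le 2X$) it gives the pointwise bound $E(t)\ll_{\varepsilon}q^{1/2}(t-X)X^{-1/2}(\log X)^2+qX^{\varepsilon}$ for $X\le t\le X+H$.

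Substituting $A=\mathcal{P}+E$ and reversing the integration by parts in the $\mathcal{P}$-part recovers exactly the asserted main term,
\begin{equation*}
\mathcal{P}(X+H)e(\beta(X+H))-2\pi i\beta\int_X^{X+H}\mathcal{P}(t)e(\beta t)\,dt=\int_X^{X+H}p_{2,q}^{\ast}(x)e(\beta x)\,dx ,
\end{equation*}
so that $S_2(a/q+\beta;X,X+H)$ equals this plus the $E$-part $E(X+H)e(\beta(X+H))-2\pi i\beta\int_X^{X+H}E(t)e(\beta t)\,dt$. The boundary term is bounded at once by \eqref{ju2}: $|E(X+H)|\ll_{\varepsilon}q^{1/2}HX^{-1/2}(\log X)^2+qX^{\varepsilon}$, which supplies the first two error terms of \eqref{integeralbyparts}.

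For the remaining integral I would use $|\beta|<T$ and Cauchy--Schwarz, $\bigl|2\pi\beta\int_X^{X+H}E(t)e(\beta t)\,dt\bigr|\le 2\pi TH^{1/2}\bigl(\int_X^{X+H}|E(t)|^2\,dt\bigr)^{1/2}$, and then estimate the second moment by writing $E(t)=\Delta(a/q;t)-\Delta(a/q;X)+O(X^{\varepsilon})$, where $\Delta(a/q;u):=S_2(a/q;u)-up_{2,q}(u)$. The quantity $\int_X^{X+H}|\Delta(a/q;t)|^2\,dt$ is controlled by differencing Lemma~\ref{julita2} at $X+H$ and at $X$ (legitimate since $X+H\le 2X$): its main term $q\bigl((X+H)^{3/2}-X^{3/2}\bigr)\asymp qHX^{1/2}$ is small on the short interval, so that $\int_X^{X+H}|\Delta(a/q;t)|^2\,dt\ll_{\varepsilon}qHX^{1/2}+q^2X^{1+\varepsilon}+q^{3/2}X^{5/4+\varepsilon}$. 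The constant term $H|\Delta(a/q;X)|^2$ is absorbed using a crude pointwise bound for $\Delta(a/q;X)$, read off either from the truncated Voronoi expansion recorded in the proof of Lemma~\ref{ju}, or more cheaply by averaging $\Delta(a/q;X)=\Delta(a/q;t)-E(t)+O(X^{\varepsilon})$ over $t$ in a window of length $X^{1/2}$ and invoking the bound just displayed. Taking a square root, multiplying by $TH^{1/2}$, and using $q\le X^{1/2}$ together with the restriction on the length $H$ relevant to the applications (where $H$ is of size about $X^{1/2}$), one checks that every term so obtained is $\ll Tq^{3/4}X^{5/8+\varepsilon}H^{1/2}$; combined with the boundary estimate this yields \eqref{integeralbyparts}. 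Alternatively one may bypass Lemma~\ref{julita2} and bound $\int_X^{X+H}|E(t)|\,dt$ by integrating the pointwise bound for $E$, giving $\ll_{\varepsilon}q^{1/2}H^2X^{-1/2}(\log X)^2+qHX^{\varepsilon}$, which again lies within the same bound in that range.

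The structural steps — the partial summation and the reverse integration by parts that isolates $\int_X^{X+H}p_{2,q}^{\ast}(x)e(\beta x)\,dx$ — are routine. The step I expect to be the genuine obstacle is the last one: controlling the ``constant of integration'' $\Delta(a/q;X)$ produced by $E(t)$ with an adequate pointwise bound, and checking the range of $H$ in which all the auxiliary error terms collapse into the single term $Tq^{3/4}X^{5/8+\varepsilon}H^{1/2}$.
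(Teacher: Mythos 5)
Your proposal follows essentially the same route as the paper: partial summation to peel off $e(\beta t)$, the estimate \eqref{ju2} for the boundary term, and then $|\beta|<T$ together with Cauchy--Schwarz and the second moment of $\Delta(a/q;\cdot)$ from Lemma~\ref{julita2} for the integrated term, which is exactly how the single displayed error term $Tq^{3/4}X^{5/8+\varepsilon}H^{1/2}$ arises in the paper. The one point where you go beyond the written proof --- tracking the ``constant of integration'' $\Delta(a/q;X)$ in $E(t)$ and disposing of it by an averaging argument over a window of length $X^{1/2}$ --- is a legitimate extra care step that the paper passes over silently, and your treatment of it is consistent with the stated bound in the relevant range of $H$.
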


\begin{proof}
By integration by parts (see \cite[Lemma 2.1]{MRT1}) along with Lemmas 3.1,3.2, we have 
\begin{equation}\begin{split}
&S_{2}(\frac{a}{q}+\beta;X,X+H)  - \int_{X}^{X+H} p_{2,q}^{\ast}(x) e(\beta x) \, dx \\& \ll_{\varepsilon} q^{1/2}HX^{-1/2}(\log X)^{2}+ qX^{\varepsilon} + \left| T \left(\int_{X}^{X+H} |\Delta(a/q;x)|^{2}dx\right)^{1/2} \left(\int_{X}^{X+H} 1 dx\right)^{1/2} \right|
%\\&\ll_{\varepsilon} q^{1/2}HX^{-1/2}(\log X)^{2}+ qX^{\varepsilon} + T\left(q^{1/2}HX^{1/4} + X^{1/2+\varepsilon}H^{1/2}q + X^{1/2}q (\log 2q)^{2}H^{1/2} + X^{5/8+\varepsilon}q^{3/4}H^{1/2}\right)
%\\&\ll_{\varepsilon} q^{1/2}HX^{-1/2}(\log X)^{2}+ qX^{\varepsilon} + Tq^{1/2}X^{5/4+\varepsilon}.
\end{split}\end{equation}
By using Lemma 3.2, it is easy to see that 
\begin{equation}\begin{split}
&\left| T\left(\int_{X}^{X+H} |\Delta(a/q;x)|^{2}dx\right)^{1/2} \left(\int_{X}^{X+H} 1 dx\right)^{1/2} \right|
\\&\ll T\left(q^{1/2}HX^{1/4} + X^{1/2+\varepsilon}H^{1/2}q + X^{1/2}q (\log 2q)^{2}H^{1/2} + X^{5/8+\varepsilon}q^{3/4}H^{1/2}\right),
\end{split}\end{equation} 
which is bounded by 
$TX^{5/8+\varepsilon}q^{3/4}H^{1/2}.$\end{proof}
\section{Propositions}

\begin{Prop}\label{sup} 
When $
\alpha \in (\frac{a}{q}-\frac{1}{qQ}, \frac{a}{q}+\frac{1}{qQ})$ for some $1 \leq a \leq q\leq P, (a,q)=1,$ we have 

\begin{equation}\label{majorerror}\left|S_{2}(\alpha;X,X+H) - \int_{X}^{X+H} p_{2,q}^{\ast}(x) e(\alpha x) dx \right|  \ll
_{\varepsilon} X^{3/8+\varepsilon+2\eta}.
\end{equation}
And when $\alpha \in m,$ we have 
$$\sup_{\alpha \in m} S_{2}(\alpha)\ll_{\varepsilon} X^{1/2-\eta+\varepsilon}+X^{1/2}\log X.$$
\end{Prop}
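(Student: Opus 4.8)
The plan is to establish the two bounds in Proposition \ref{sup} separately, treating the major-arc estimate first and then the minor-arc supremum.

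\medskip

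\textbf{Major arcs.} Here $\alpha = a/q + \beta$ with $q \leq P = X^{1/4-\eta}$ and $|\beta| < \frac{1}{qQ} \leq \frac{1}{qP} = X^{-1/2+2\eta}/q$, so we may apply Lemma \ref{35} with $T = X^{-1/2+2\eta}$. This yields
\[
\left| S_{2}(\alpha;X,X+H) - \int_{X}^{X+H} p_{2,q}^{\ast}(x) e(\alpha x)\, dx \right| \ll_{\varepsilon} q X^{\varepsilon} + q^{1/2} H X^{-1/2}(\log X)^{2} + T q^{3/4} X^{5/8+\varepsilon} H^{1/2}.
\]
It remains to check that each of the three terms is $\ll_{\varepsilon} X^{3/8+\varepsilon+2\eta}$. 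For the first, $q \leq X^{1/4-\eta} \ll X^{3/8}$. For the second, since $H \ll |[X,X+H]^{\ast}| \cdot \text{(something)}$ — more precisely $H = |[1,H]^{\ast}|^{2\lambda}(\log X)^{-3} \cdot \text{const}$ arising from the normalization in Theorem \ref{main} — one has $H \leq X/(\log X)^{3}$, hence $q^{1/2} H X^{-1/2}(\log X)^{2} \ll X^{1/8} \cdot X^{1/2} \cdot X^{-1/2} (\log X)^{-1} \ll X^{1/8}$, comfortably within budget. For the third term, substituting $T = X^{-1/2+2\eta}$ and $q \leq X^{1/4}$ and $H^{1/2} \leq X^{1/2}$ gives $X^{-1/2+2\eta} \cdot X^{3/16} \cdot X^{5/8+\varepsilon} \cdot X^{1/2} = X^{13/16 + \varepsilon + 2\eta}$, which is \emph{too large}; so one must instead use $H \leq X/(\log X)^3$ so that $H^{1/2} \ll X^{1/2}(\log X)^{-3/2}$, and more importantly keep $q^{3/4} \leq X^{3/16-3\eta/4}$, giving $X^{-1/2+2\eta+3/16-3\eta/4+5/8+\varepsilon+1/2} = X^{13/16+\varepsilon+\ldots}$ still. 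The resolution is that the relevant $H$ is much smaller than $X$: with $\lambda = 1 - \log(\log g+1)/\log(g-1)$ close to $1$ for large $g$, we have $|[1,H]^{\ast}| \asymp H^{\log(g-1)/\log g}$, so $H \asymp |[1,H]^{\ast}|^{\log g/\log(g-1)}$ and $|[1,H]^{\ast}| = (X/(\log X)^3)^{1/2\lambda}$ forces $H = (X/(\log X)^3)^{\frac{\log g}{2\lambda \log(g-1)}}$, which for large $g$ is essentially $X^{1/2}$ up to logs. Then $T q^{3/4} X^{5/8+\varepsilon} H^{1/2} \ll X^{-1/2+2\eta} \cdot X^{3/16} \cdot X^{5/8+\varepsilon} \cdot X^{1/4} = X^{9/16+\varepsilon+2\eta}$ — I will need to recheck the exponent bookkeeping carefully, as this is the step where the precise definition of $H$ and the dominant term $X^{3/8+\varepsilon+2\eta}$ must be reconciled; this exponent arithmetic is the main technical bottleneck of the major-arc half.

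\medskip

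\textbf{Minor arcs.} For $\alpha \in m$, by Dirichlet's approximation theorem write $\alpha = a/q + \beta$ with $q \leq Q = X^{1/2-2\eta}$, $(a,q)=1$, $|\beta| \leq \frac{1}{qQ}$; since $\alpha \notin \mathcal{M}$ we must have $q > P = X^{1/4-\eta}$, so $P < q \leq Q$. There are two sub-cases. If $q|\beta| \gg X^{-1/2}$, apply the approximate functional equation \eqref{ju1}: the dual sum has length $(q\beta)^2 H \leq (qQ)^{-2} H \cdot q^{2} \cdot \ldots$, in any case bounded, and $(q|\beta|)^{-1} \ll X^{1/2}$; combined with the $O(X^{1/2}\log X)$ error this gives $S_2(\alpha;X,X+H) \ll X^{1/2}\log X$, after bounding the dual $d_2$-sum trivially by its length times $X^{\varepsilon}$ and checking the length is $O(1)$ or at worst $X^{\varepsilon}$. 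If instead $q|\beta| \ll X^{-1/2}$, i.e. $|\beta|$ is tiny, apply \eqref{ju2} (valid since $q \leq Q \leq X^{1/2}$ needs $q \le X^{1/2}$, which holds): the main term $(X+H)p_{2,q}(X+H) - X p_{2,q}(X) = \int_X^{X+H} p_{2,q}^{\ast}(x)\,dx \ll \frac{H}{q}\log X \ll \frac{H \log X}{P} \ll X^{1/4+\varepsilon}$ using $q > P$ and $H \leq X/(\log X)^3$ (indeed $H \asymp X^{1/2+o(1)}$, even better), while the error $q^{1/2}HX^{-1/2}(\log X)^2 + qX^{\varepsilon} \ll X^{1/4} \cdot X^{1/2} \cdot X^{-1/2}(\log X)^2 + X^{1/2-2\eta+\varepsilon} \ll X^{1/2-\eta+\varepsilon}$, where the $q X^\varepsilon \le Q X^\varepsilon = X^{1/2-2\eta+\varepsilon}$ term dominates and accounts for the $X^{1/2-\eta+\varepsilon}$ appearing in the statement. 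Taking the supremum over both sub-cases and over $\alpha \in m$ gives $\sup_{\alpha\in m} S_2(\alpha) \ll_{\varepsilon} X^{1/2-\eta+\varepsilon} + X^{1/2}\log X$, as claimed.

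\medskip

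\textbf{Expected obstacle.} The conceptual content is entirely in Lemmas \ref{ju}, \ref{julita2}, \ref{35} (Jutila's twisted Voronoi machinery), so the remaining work is purely a matter of inserting the ranges $q \leq P$ (major) or $P < q \leq Q$ (minor) and $|\beta| < (qQ)^{-1}$ into those estimates and verifying the exponents collapse to the claimed bounds. The one genuinely delicate point is the third error term $Tq^{3/4}X^{5/8+\varepsilon}H^{1/2}$ in Lemma \ref{35}: making this $\ll X^{3/8+\varepsilon+2\eta}$ requires simultaneously using $T = (qQ)^{-1}$ (not merely $T < 1$), $q \leq P = X^{1/4-\eta}$, and the fact that $H = X^{1/2+o(1)}$ rather than $H$ being as large as $X$; I expect to spend most of the effort pinning down that $H$ is of size $X^{1/2}$ up to logarithmic factors (from the normalization $|[1,H]^{\ast}| = (X/(\log X)^3)^{1/2\lambda}$ together with $|[1,H]^{\ast}| \asymp H^{\log(g-1)/\log g}$) and then confirming the arithmetic $-\tfrac12 - \tfrac14 + \tfrac{3}{16} + \tfrac58 + \tfrac14 \cdot \tfrac{\log g}{\lambda\log(g-1)} \le \tfrac38 + 2\eta$ holds for $g$ sufficiently large, since $\lambda \to 1$ and $\frac{\log g}{\log(g-1)} \to 1$ as $g \to \infty$.
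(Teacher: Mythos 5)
Your overall strategy is the same as the paper's: Lemma \ref{35} on the major arcs, and Dirichlet approximation plus the dichotomy $q|\beta|\gg X^{-1/2}$ versus $q|\beta|\ll X^{-1/2}$ on the minor arcs. The minor-arc half is essentially sound apart from one citation slip: in the sub-case where $q|\beta|\ll X^{-1/2}$ you invoke \eqref{ju2}, which is a formula for $S_2(a/q;X,X+H)$ at $\beta=0$ only; for small nonzero $\beta$ you need Lemma \ref{35} with $T\asymp(qX^{1/2})^{-1}$ (the paper uses the threshold $|\beta|\asymp q^{-2/3}X^{-1/2}$ for exactly this purpose), after which the same numerics go through and the dominant contribution is indeed $qX^{\varepsilon}\le QX^{\varepsilon}=X^{1/2-2\eta+\varepsilon}$, alongside the $X^{1/2}\log X$ from \eqref{ju1}.

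The major-arc half, however, contains a genuine unresolved gap which you yourself flag: your evaluation of the third error term of Lemma \ref{35} produces $X^{9/16+2\eta+\varepsilon}$ rather than $X^{3/8+2\eta+\varepsilon}$, and no amount of shrinking $H$ repairs it, since you correctly determine $H=X^{1/2+o(1)}$ and hence $H^{1/2}\le X^{1/4+o(1)}$ is already being used. The missing observation is that one must take $T=(qQ)^{-1}$ rather than the weaker $T=Q^{-1}$, so that the factor $q^{3/4}$ is absorbed by the $q^{-1}$ hidden in $T$, uniformly in $1\le q\le P$:
$$ Tq^{3/4}X^{5/8+\varepsilon}H^{1/2}=\frac{q^{3/4}}{qQ}X^{5/8+\varepsilon}H^{1/2}=\frac{1}{q^{1/4}Q}X^{5/8+\varepsilon}H^{1/2}\le Q^{-1}X^{5/8+\varepsilon}H^{1/2}\ll X^{-1/2+2\eta}\cdot X^{5/8+\varepsilon}\cdot X^{1/4}=X^{3/8+2\eta+\varepsilon}.$$
Your closing paragraph gestures at this (``using $T=(qQ)^{-1}$''), but the arithmetic you then propose, $-\tfrac12-\tfrac14+\tfrac3{16}+\cdots$, implicitly bounds $q^{-1}$ by $X^{-1/4}$, which fails for small $q$; the correct uniform step is simply $q^{-1}\cdot q^{3/4}=q^{-1/4}\le 1$. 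With that one line inserted the major-arc bound follows, since the remaining terms $qX^{\varepsilon}\le PX^{\varepsilon}=X^{1/4-\eta+\varepsilon}$ and $q^{1/2}HX^{-1/2}(\log X)^2\ll P^{1/2}X^{o(1)}$ are harmless, exactly as in the paper (which additionally treats the point $\beta=0$ separately via \eqref{ju2}).
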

\begin{proof}

If $\alpha= \frac{a}{q}$ for some  $1 \leq a \leq q\leq P, (a,q)=1,$ then by \eqref{ju2},
it is bounded by $P^{1/2}HX^{-1/2}(\log X)^2 + P X^{\varepsilon}.$ 
If not, then by using \eqref{integeralbyparts}, 
$$\left|S_{2}(\alpha;X,X+H) - \int_{X}^{X+H} p_{2,q}^{\ast}(x) e(\alpha x) dx \right|\ll_{\varepsilon} PX^{\varepsilon} +Q^{-1}X^{5/8+\varepsilon}H^{1/2}$$ 
(since $H < X^{1/2}, $ the second part of the error term can be ignored). 

Now, let us consider the second inequality. By the Dirichlet theorem, for any $\alpha \in m,$ there exists a fraction $\frac{a}{q}$ such that $1 \leq a \leq q, (a,q)=1,$ $P< q \leq Q,$ and $\left|\alpha-\frac{a}{q}\right| \leq \frac{1}{qQ}.$

First, let $\beta=|\alpha - \frac{a}{q}| \gg \frac{1}{q^{2/3}X^{1/2}}$ for some 
$1 \leq a \leq q, (a,q)=1,$ $P< q \leq Q.$ Then by using \eqref{ju1}, 
\begin{equation}\begin{split}S_{2}(\alpha;X,X+H) &\leq |q\beta| \sum_{n \in [|q\beta|^{2}X, |q\beta|^{2}(X+H)]} d_{2}(n) + O(X^{1/2}\log X) \\&\ll_{\varepsilon} \frac{H\log X}{Q} +X^{1/2}\log X.\end{split}\end{equation}
 Let $|\alpha - \frac{a}{q}| \ll \frac{1}{q^{2/3}X^{1/2}}.$ Then by \eqref{integeralbyparts}, we have  
     \begin{equation}\begin{split}S_{2}(\alpha;X,X+H) &\ll_{\varepsilon} \frac{H\log X}{P}   + QX^{\varepsilon} + Q^{1/2}HX^{-1/2+\varepsilon}+ Q^{1/12}X^{3/8+\varepsilon-\eta/4}.\end{split}\end{equation}
 Using the fact that $P^{2}=Q$ and $ \log Q < (\log X)/2,$ the proof is completed.
\end{proof}

\begin{Prop}\label{Prop1} Let $T=\frac{1}{qQ}.$ Then
\begin{equation}\label{majorerror}\begin{split}
\sum_{n \in [X,X+H]^{\ast}} & \int_{\alpha \in \mathcal{M}} S_{2}(\alpha;X,X+H) e(-n \alpha)  \, d\alpha
\\&= \sum_{q \leq P} \sum_{1 \leq a <q \atop (a,q)=1} \sum_{n \in [X,X+H]^{\ast}} \frac{1}{\pi} \int_{X}^{X+H} \frac{p_{2,q}^{\ast}(x)}{x-n} \sin (2\pi T(x-n)) \, dx + O\left(X^{3/8+2\eta+\varepsilon}\right).
\end{split}\end{equation}

\end{Prop}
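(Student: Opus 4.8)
The plan is to cut $\mathcal{M}$ into its Farey arcs, use Lemma~\ref{35} to replace $S_{2}(\alpha;X,X+H)$ by its integral main term on each arc, evaluate the $\beta$-integral by a Dirichlet-kernel identity, and then retain the cancellation in $\sum_{n\in[X,X+H]^{\ast}}e(-n\alpha)=\overline{S_{[X,X+H]^{\ast}}(\alpha)}$ so that the accumulated error can be absorbed using Lemma~\ref{may}. First, since $Q=P^{2}\ge 2P$ the intervals $\big(\tfrac{a}{q}-\tfrac{1}{qQ},\,\tfrac{a}{q}+\tfrac{1}{qQ}\big)$ with $q\le P$, $(a,q)=1$ are pairwise disjoint, so for each fixed $n$ the left-hand side equals
\[
\sum_{q\le P}\ \sum_{\substack{1\le a<q\\(a,q)=1}}\ \int_{|\beta|<\frac{1}{qQ}}S_{2}\!\Big(\tfrac{a}{q}+\beta;X,X+H\Big)\,e\!\Big(\!-n\big(\tfrac{a}{q}+\beta\big)\Big)\,d\beta .
\]

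On each arc I would invoke Lemma~\ref{35} in the form $S_{2}\!\big(\tfrac{a}{q}+\beta;X,X+H\big)=\int_{X}^{X+H}p_{2,q}^{\ast}(x)e(\beta x)\,dx+E(a,q,\beta)$ with $E(a,q,\beta)\ll_{\varepsilon}X^{3/8+\varepsilon+2\eta}$ uniformly (using $q\le P$, $H<X^{1/2}$ and $\tfrac{1}{qQ}q^{3/4}\le Q^{-1}$ to collapse the three error terms of Lemma~\ref{35} into the single exponent, exactly as in Proposition~\ref{sup}). For the main part, Fubini's theorem (every integrand being bounded over a bounded region) together with the elementary identity $\int_{-T}^{T}e(\beta u)\,d\beta=\frac{\sin(2\pi Tu)}{\pi u}$, applied with $T=\tfrac{1}{qQ}$ and $u=x-n$, converts the inner double integral into
\[
e\!\Big(\!-\tfrac{na}{q}\Big)\,\frac{1}{\pi}\int_{X}^{X+H}\frac{p_{2,q}^{\ast}(x)}{x-n}\,\sin\!\big(2\pi T(x-n)\big)\,dx .
\]
Summing over $q$, over the reduced residues $a$, and over $n\in[X,X+H]^{\ast}$ then yields the stated main term; the phases $e(-na/q)$ reassemble into the Ramanujan sums $c_{q}(n)$, which is the shape in which the main term enters Theorem~\ref{main}.

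It remains to estimate the total error. Interchanging the finite $n$-sum with the $\alpha$-integral, the difference between the left-hand side and the main term equals
\[
\int_{\mathcal{M}}\Big(S_{2}(\alpha;X,X+H)-\int_{X}^{X+H}p_{2,q}^{\ast}(x)e(\beta x)\,dx\Big)\,\overline{S_{[X,X+H]^{\ast}}(\alpha)}\,d\alpha ,
\]
with $q,\beta$ the Farey data of $\alpha$, and I would bound this by the product of $\sup_{\alpha\in\mathcal{M}}\big|S_{2}(\alpha;X,X+H)-\int_{X}^{X+H}p_{2,q}^{\ast}(x)e(\beta x)\,dx\big|$ and $\int_{0}^{1}|S_{[X,X+H]^{\ast}}(\alpha)|\,d\alpha$. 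The first factor is $\ll_{\varepsilon}X^{3/8+\varepsilon+2\eta}$ by Proposition~\ref{sup}, while $\int_{0}^{1}|S_{[X,X+H]^{\ast}}(\alpha)|\,d\alpha=|[X,X+H]^{\ast}|\int_{0}^{1}F_{[X,X+H]}(\alpha)\,d\alpha\ll|[X,X+H]^{\ast}|^{1-\lambda}$ by Lemma~\ref{may}. Since $|[X,X+H]^{\ast}|=(X/(\log X)^{3})^{1/(2\lambda)}$ this last quantity is $(X/(\log X)^{3})^{(1-\lambda)/(2\lambda)}$, which is $\ll X^{\varepsilon}$ because $g$ is sufficiently large (so $\lambda$ is as close to $1$ as one wishes). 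Hence the whole error is $O\big(X^{3/8+2\eta+\varepsilon}\big)$, as claimed.

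The hard part is this last step. A termwise estimate — bounding the Proposition~\ref{sup} error over the $\asymp P^{2}$ arcs and the $\asymp|[X,X+H]^{\ast}|\approx X^{1/2}$ integers $n$ — overshoots the target by about $X^{1/4}$, so the cancellation in $\sum_{n}e(-n\alpha)$ must be kept intact; it is precisely the $L^{1}$-concentration of $S_{[X,X+H]^{\ast}}$ recorded in Lemma~\ref{may}, together with the calibrated size $|[1,H]^{\ast}|^{\lambda}\asymp X^{1/2}(\log X)^{-3/2}$ and the largeness of $g$, that makes the numerics close.
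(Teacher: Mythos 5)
Your proposal is correct and follows essentially the same route as the paper: replace $S_{2}$ on each major arc by $\int_{X}^{X+H}p_{2,q}^{\ast}(x)e(\beta x)\,dx$ via Lemma \ref{35} (with the uniform bound of Proposition \ref{sup}), evaluate the $\beta$-integral as a Dirichlet kernel to get the $\sin(2\pi T(x-n))/(\pi(x-n))$ main term, and control the accumulated error by $\sup_{\alpha\in\mathcal{M}}|\cdot|$ times the $L^{1}$ bound of Lemma \ref{may}, which is exactly how the paper absorbs the $X^{1/4}$ that a termwise estimate would lose. You are also right that the computation produces the phase $e(-na/q)$ (hence $c_{q}(n)$ after summing over $a$), which is the form used in Theorem \ref{main} even though it is omitted from the displayed statement of the proposition.
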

\begin{proof} By using Lemma \ref{35}, we only need to consider the error term contributions.
The contribution from the error terms is bounded by 
$$ \sup_{\alpha \in \mathcal{M}} \left(\left|S_{2}(\alpha;X,X+H) - \int_{X}^{X+H} p_{2,q}^{\ast}(x) e(\alpha x) dx \right|\right)\int_{\alpha \in \mathcal{M}}  |[X,X+H]^{\ast}| F_{[X,X+H]}(\alpha) d \alpha. $$
By using the bounds of the supremum in Proposition \ref{sup} and the exponential sum bound in Lemma \ref{may}, the above term is bounded by 
$$O\left(X^{3/8+2\eta+\varepsilon}\right).$$

%\[
%\int_{|\beta| < T} S_{k}(\beta) e(-\beta n) \, d\beta = \int_{|\beta| < T} \int_{X}^{2X} p_{k}(\log x) e(\beta x) e(-\beta n) \, d\beta + O\left(X^{1+\frac{\alpha_{k}}{2}} T^{2} + X^{1-c_{k}} T\right).
%\]

%And we have 
%\[
%\int_{|\beta| < T} e(\beta (x-n)) \, d\beta = \frac{e^{2\pi iT (x-n)} - e^{-2 \pi iT(x-n)}}{2\pi i (x-n)} = \frac{\sin(2 \pi T(x-n))}{\pi (x-n)},
%\]
%when $x \neq n$, and $2T$ if $x = n$.

%Since
%\[
%\lim_{x \to n} \frac{\sin(2 \pi T(x-n))}{\pi (x-n)} = 2T,
%\]
%we can ignore the point $x = n$ and obtain the main term.

\end{proof}

\begin{Prop}\label{Prop2}

\[
\sum_{n \in [X,X+H]^{\ast}} \int_{\alpha \in m} S_{2}(\alpha;X,X+H) e(-\alpha n) \, d\alpha \ll_{\varepsilon} X^{1/2 -\eta + \varepsilon}+\left|[1,H]^{\ast}\right|(\log X)^{5/2}.
\]

\end{Prop}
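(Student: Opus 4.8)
The plan is to separate the two exponential sums by the triangle inequality and then invoke Proposition \ref{sup} and Lemma \ref{may}; no cancellation in the $\alpha$-integral will be needed. First I would interchange the (finite) sum over $n\in[X,X+H]^{\ast}$ with the integral over $m$ and recognize the resulting inner sum as a conjugate of the restricted-digit exponential sum:
\[
\sum_{n \in [X,X+H]^{\ast}} \int_{\alpha \in m} S_{2}(\alpha;X,X+H)\, e(-\alpha n)\,d\alpha \;=\; \int_{\alpha \in m} S_{2}(\alpha;X,X+H)\, \overline{S_{[X,X+H]^{\ast}}(\alpha)}\,d\alpha .
\]
Then I would bound the integrand by the pointwise supremum of $|S_{2}(\alpha;X,X+H)|$ over $m$, use $\bigl|\overline{S_{[X,X+H]^{\ast}}(\alpha)}\bigr| = |[X,X+H]^{\ast}|\,F_{[X,X+H]}(\alpha)$, and enlarge the remaining integral to all of $[0,1]$, obtaining
\[
\Bigl|\int_{\alpha\in m} S_{2}(\alpha;X,X+H)\,\overline{S_{[X,X+H]^{\ast}}(\alpha)}\,d\alpha\Bigr| \;\le\; \Bigl(\sup_{\alpha\in m}\bigl|S_{2}(\alpha;X,X+H)\bigr|\Bigr)\,|[X,X+H]^{\ast}| \int_{[0,1]} F_{[X,X+H]}(\alpha)\,d\alpha .
\]

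Next I would feed in the two ingredients already established. Proposition \ref{sup} gives $\sup_{\alpha\in m}|S_{2}(\alpha;X,X+H)| \ll_{\varepsilon} X^{1/2-\eta+\varepsilon} + X^{1/2}\log X$, which for $\varepsilon<\eta$ is $\ll X^{1/2}\log X$; and Lemma \ref{may} gives $\int_{[0,1]}F_{[X,X+H]}(\alpha)\,d\alpha \ll |[X,X+H]^{\ast}|^{-1+\frac{\log(\log g+1)}{\log(g-1)}} = |[X,X+H]^{\ast}|^{-\lambda}$, so the last factor above is $\ll |[X,X+H]^{\ast}|^{1-\lambda}$. By the Remark $|[X,X+H]^{\ast}| = |[1,H]^{\ast}|$, and the calibration in Theorem \ref{main}, $|[1,H]^{\ast}|^{\lambda} = (X/(\log X)^{3})^{1/2}$, yields
\[
|[X,X+H]^{\ast}|^{1-\lambda} \;=\; |[1,H]^{\ast}|\cdot|[1,H]^{\ast}|^{-\lambda} \;=\; |[1,H]^{\ast}|\,X^{-1/2}(\log X)^{3/2}.
\]
Multiplying the two bounds I would get $\ll_{\varepsilon} X^{1/2}\log X\cdot |[1,H]^{\ast}|\,X^{-1/2}(\log X)^{3/2} = |[1,H]^{\ast}|(\log X)^{5/2}$, which in particular is $\ll_{\varepsilon} X^{1/2-\eta+\varepsilon} + |[1,H]^{\ast}|(\log X)^{5/2}$, the asserted bound.

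The hard part is already behind us: the whole substance sits in Proposition \ref{sup} — the minor-arc estimate $\sup_{\alpha\in m}|S_{2}(\alpha;X,X+H)| \ll X^{1/2}\log X$, built on Jutila's short-interval Voronoi/Wilton-type machinery — and in Lemma \ref{may}, Maynard's $L^{1}$ Fourier bound $\int_{0}^{1}|S_{[X,X+H]^{\ast}}(\alpha)|\,d\alpha \ll |[X,X+H]^{\ast}|^{1-\lambda}$ for the restricted-digit set. The only point demanding care at this stage is that one must \emph{not} try to exploit cancellation in the $m$-integral; the crude triangle inequality suffices precisely because $\lambda$ is close to $1$ for large $g$, so that $L^{1}$ norm is tiny. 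Conceptually, the single load-bearing decision is the size of $H$: the calibration $|[1,H]^{\ast}|^{\lambda}\asymp X^{1/2}(\log X)^{-3/2}$ is (up to the power of $\log X$) the smallest choice for which $X^{1/2}\cdot|[1,H]^{\ast}|^{1-\lambda}$ stays within a power of $\log X$ of $|[1,H]^{\ast}|$, i.e.\ within the error term of Theorem \ref{main}; any appreciably smaller $H$ would make the triangle-inequality bound overshoot that error term.
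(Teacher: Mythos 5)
Your proposal is correct and follows essentially the same route as the paper: bound the minor-arc contribution by $\sup_{\alpha\in m}|S_{2}(\alpha;X,X+H)|$ times $|[X,X+H]^{\ast}|\int F_{[X,X+H]}(\alpha)\,d\alpha$, then apply Proposition \ref{sup} and Lemma \ref{may}. In fact you carry out the final calibration $|[1,H]^{\ast}|^{\lambda}=(X/(\log X)^{3})^{1/2}$ more explicitly than the paper does, which simply states that the supremum bound completes the proof.
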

\begin{proof}
The contribution from the minor arc is bounded by 
$$ \sup_{\alpha \in m}|S_{2}(\alpha;X,X+H)| \int_{\alpha \in m}  |[X,X+H]^{\ast}| F_{[X,X+H]}(\alpha) d \alpha.  $$
By using the bounds of the supremum in Proposition \ref{sup}, the proof is completed.
\end{proof}
\subsection{Proof of Theorem 1.1}
By combining Propositions \ref{Prop1} and \ref{Prop2}, Theorem \ref{main} is proved.

\subsection*{Acknowledgements} The author would like to thank Professor Micah Milinovich for suggesting various short-interval results for the divisor sum and results relevant to the main term, as well as Jaime Hernandez Palacios for useful discussions. The author also thanks Kunjakanan Nath for informing him about the results relevant to Lemma 1.2 and for providing the elementary proof for the case of long intervals.
%\section{short interval}
%For the short interval version, we use the following notations:
%\[
%x(H)^{\ast} := [X,X+H]^{\ast} \cap \left( [x, x+H] \cup [2x, 2x+2H] \right),
%\]
%\[
%T_{2}(x, \alpha) := \sum_{n=x}^{x+H} d_{2}(n) e(n\alpha) + \sum_{n=2x}^{2x+2H} d_{2}(n) e(n\alpha), \quad
%E_{x, H}(\alpha) := \left| \sum_{n \in x(H)^{\ast}} e(n\alpha) \right|.
%\]

%Note that:
%\[
%E_{x, H}(\alpha) \ll |[X,X+H]^{\ast}| F_{X}(\alpha).
%\]
%By Lemma 3.1, 
%$$S_{k}(\beta) = \int_{X}^{2X} p_{k}( \log x) e(\beta x) dx + O\left(X^{1+\frac{\alpha_{k}}{2}}T + X^{1-c_{k}}\right).$$ 
%By integrating by parts,
%\[
%\int_{X}^{2X} p_{k}(\log x) e(\beta x) \, dx \ll \frac{(\log X)^{k-1}}{|\beta|}.
%\]
%Thus,
%\[
%\sup_{T < |\beta| < 1} |S_{k}(\beta)| \ll X^{1+\alpha_{k}/2+\varepsilon}.
%\]

%Therefore,
%\[
%\sum_{n \in [X,X+H]^{\ast}} \int_{T < |\beta| < 1} S_{k}(\beta) e(-\beta n) \, d\beta \ll \sup_{T < |\beta| < 1} |S_{k}(\beta)| \left| [X,X+H]^{\ast} \right| \int_{[0,1]} F_{[X,X+H]^{\ast}}(\alpha) \, d\alpha
%\]
%\[
%\ll X^{1 + \alpha_{2}/2 + \varepsilon - 1} \left| [X,X+H]^{\ast} \right|.
%\]

\bibliographystyle{plain}   % this means that the order of references
			    % is dtermined by the order in which the
			    % \cite and \nocite commands appear
\bibliography{over}  % list here all the bibliogres that
			     % you need. 
\end{document}